\def\bc{\begin{center}}
\def\ec{\end{center}}
\def\s2c{\vskip 2cm}
\begin{document}
\numberwithin{equation}{section}
\newtheorem{theorem}{Theorem}[section]
\newtheorem{lemma}[theorem]{Lemma}
\newtheorem{remark}[theorem]{Remark}
\newtheorem{definition}[theorem]{Definition}
\newtheorem{example}[theorem]{Example}
\allowdisplaybreaks

\title{Qualitative Analysis of Some Dynamic Equations on Time Scales}
\author {Syed Abbas \\
School of Basic Sciences, \\
Indian Institute of Technology Mandi,\\
Kamand (H.P.) - 175 005, India.
\\
Email: abbas@iitmandi.ac.in}\maketitle
\author
\noindent {\bf Abstract} : In this paper, we establish the Picard-Lindel\"{o}f theorem and approximating results for dynamic  equations on time scale. We present a simple proof for the existence and uniqueness of the solution. The proof is produced by using convergence and Weierstrass M-test. Furthermore, we show that the Lispchitz condition is not necessary for uniqueness. The existence of $\epsilon-$approximate solution is established under suitable assumptions. Moreover, we study the approximate solution of the dynamic equation with delay by studying the solution of the corresponding dynamic equation with piecewise constant argument. We show that the exponential stability is preserved in such approximations.
\vskip .5cm \noindent \textbf{Key Words:} Dynamic equations, time scale calculus, Weierstrass M-test, uniform convergence, Picard's iteration, $\epsilon$-approximate solution.
\vskip .5cm \noindent {\em \bf Mathematical Subject Classification}:34N05, 26E70, 34A12.

\section{Introduction}

In this paper, we study the following dynamic equation
\begin{equation} \label{meq}
x^{\Delta}(t)=f(t,x(t)), \quad x(t_0)=x_0, \quad t \in [a,b]\cap \mathbb{T},
\end{equation}
where $\Delta$ is delta derivative, $\mathbb{T}$ is a time scale and $f$ is a rd-continuous function defined on $([a,b]\cap \mathbb{T}) \times \mathbb{R}$. We prove Picard-Lindel\"{o}f theorem for (\ref{meq}) using the assumption of rd-continuity and Lipschtiz and then establish existence of $\epsilon-$approximate solution. A solution of the above equation means a delta differentiable function which satisfies (\ref{meq}).

The theory of time scale calculus was introduced in the year 1988 by Stefan Hilger \cite{hilger}. This new theory unifies the calculus of the theory of difference equations with that of differential equations. It combines the analysis for integral and differential calculus with the calculus of finite differences. It gives a way to study hybrid discrete-continuous dynamical systems and has applications in any field that requires simultaneous modelling of discrete and continuous data. Hence, dynamic equations on a time scale have a potential for applications. In the population dynamics, the insect population can be better modelled using time scale calculus. The reason behind this is that they evolve continuously while in season, die out in winter while their eggs are incubating or dormant, and then hatch in a new season, giving rise to a non-overlapping population.

Lots of excellent books, monographs and research papers are available in this field majorally contributed by Martin Bohner, Allan Peterson, Lynn Erbe,  Ravi P. Agarwal, Samir H. Saker, Zhenlai Han,  Qi Ru Wang,  Youssef N. Raffoul and many more, we refer to \cite{14,aga1,16,7,boh3,boh4,17,18,cic,15,erbe1,erbe2,erbe3,9,8,10,raf1,sak1,sak2,12,1,3,11,2} and references therein. The books \cite{boh1,geo1} present a complete discussion on time scale calculus. A very nice survey article is written by Agarwal et.al. \cite{aga2}. For results on ordinary differential equations, we refer the books by Shair Ahmad and Rao, Arino and Conrad \cite{ahmad,4,6}, which are nice books for qualitative theory of ordinary differential equations. A linearlization method and topological classification for equations on time scale is discussed by Yonghui Xia et.al. in \cite{xia1,xia2}. In the paper \cite{cic}, M. Cicho\'{n} investigates a counterexample to Peano's Theorem on a time scale with only one right dense point.

In order to implement the continuous model for simulation purpose, it is essential to convert it into a discrete model. The resulting model will be a discrete equation which is easy to solve and implement. In this case we need to ensure that the discrete time model preserves the qualitative properties of the continuous time model. We can then use the continuous model without loss of functional similarity. It will also preserve the physical and biological reality that the continuous time model exhibits. In this work, we also consider a semilinear dynamic equation on time scale and study its approximate solutions. We show that the approximation preserves the exponential stability of the solution. For more details on equations with piecewise constant arguments, we refer to \cite{gopal1,moh1} and references therein.

The paper is organized as follows: In Section 2, we give necessary definitions and present some basic results, Section 3 is devoted to the existence of solution and the existence of $\epsilon-$approximate solution. Finally, in the last section, we discuss the approximate solution of a semilinear dynamic equation on time scale. We also establish the exponential stability of the solution.

\section{Preliminaries}

A time scale $\mathbb{T}$ is a nonempty closed subset of the real line $\mathbb{R}.$ The forward jump operator $\sigma(t)$ is defined by $\sigma(t)=\inf\{s\in \mathbb{T}: s>t\}.$ The right-dense point is defined be a point $t$ when $t < \sup \mathbb{T}$ and $\sigma(t)=t.$ It is called right scattered if $\sigma(t)>t.$ Similarly the backward jump operator $\rho(t)$ is defined by $\rho(t)=\sup\{s \in \mathbb{T}: s<t\}.$ So, a left-dense point is defined by the points such that $t>\inf \mathbb{T}$ and $\rho(t)=t.$ It is called left-scattered if $\rho(t)<t.$ We denote $\mathbb{T}^k=\mathbb{T}\setminus\{m\}$ or $\mathbb{T}_k=\mathbb{T}\setminus\{m\}$ if $\mathbb{T}$ has a left-scattered maximum or right-scattered minimum $m,$ respectively, otherwise $\mathbb{T}^k=\mathbb{T}_k=\mathbb{T}.$
\begin{definition}
A function $g:\mathbb{T} \rightarrow \mathbb{R}$ is called rd continuous if it is continuous at right-dense points of $T$ and its left-side limit exists at left-dense points.
\end{definition}
If $f$ is continuous at each right-dense point and each left-dense point, then $f$ is called continuous on $\mathbb{T}.$
\begin{definition}
For a function $g:\mathbb{T} \rightarrow \mathbb{R}, \ t \in \mathbb{T}^k,$ the delta derivative is defined as a function $f^{\Delta}(t),$ such that for each $\epsilon>0,$ there exists a neighbourhood $U$ of $t$ with the property
$$|f(\sigma(t)-f(s))-f^{\Delta}(t)(\sigma(t)-s)| \le \epsilon |\sigma(t)-s|,$$ for all $s \in U.$
\end{definition}

\begin{definition} The delta integral is defined as the antiderivative with respect to the delta derivative. If $F(t)$ has a continuous derivative $f(t)=F^{\Delta }(t),$ then
$\int _{r}^{t}f(s)\Delta s=F(t)-F(r).$
\end{definition}

\begin{remark} In the above definition, integral inequality does not holds for all time scales. For example in q-calculus the following relation is not correct: $\int_a^b D_q f(t) d_qt= f(b)-f(a)$. For more detail we refer the readers to \cite{boh1}.
\end{remark}

\begin{definition}
The set $Reg=\{p: \mathbb{T} \rightarrow R: p \in C_{rd}(\mathbb{T},\mathbb{R}), \ 1+p(t)\mu(t) \neq 0\}$ defines the set of regressive functions, where $\mu(t)=\sigma(t)-t.$
\end{definition}
For $p \in Reg,$ the exponential function $e_p(\cdot,t_0)$ is defined as the unique solution of the IVP $x^{\Delta}=p(t)x, \ x(t_0)=1.$ Further $Reg^+$ corresponds to $1+\mu(t)p(t)>0,$ and for $p \in Reg^+, \ e_p(\cdot,t_0)>0.$ Now we define the concept of $\epsilon-$approximate solution for the equation (\ref{meq}). Let $\Omega$ be any compact subset of $([a,b]\cap \mathbb{T}) \times \mathbb{R}.$
\begin{definition} A function $x(t)$ is called $\epsilon-$approximate solution of the equation (\ref{meq}) if
\begin{itemize}
\item[(i)] $(t,x(t)) \in \Omega, t \in [a,b]\cap \mathbb{T}.$
\item[(ii)] $x^{\Delta}(t) \in C^1_{rd}$ on $[a,b]\cap \mathbb{T}$ except possibly on a finite set $S$ (or a set of measure zero), where $x^{\Delta}$ may have simple discontinuities.
\item[(iii)] $\|x^{\Delta}(t)-f(t,x(t))\| \le \epsilon$ for each $t \in [a,b]\cap \mathbb{T}\backslash S.$
\end{itemize}
\end{definition}

\begin{lemma}(Weierstrass M-test)
Suppose that $\{\phi_n\}_{n \in N}$ is a sequence of real or complex-valued functions defined on a set A, and that there is a sequence of positive numbers $\{M_n\}$ satisfying
$\forall n\geq 1,\forall x\in A:\ |f_{n}(x)|\leq M_{n}, \sum _{{n=1}}^{{\infty }}M_{n}<\infty.$
Then the series $\sum _{{n=1}}^{{\infty }}f_{n}(x)$ converges absolutely and uniformly on A.
\end{lemma}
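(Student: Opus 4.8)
The plan is to prove the two assertions of the lemma in turn: absolute convergence first, since it is a purely pointwise statement, and then uniform convergence, which is where the summability of $\{M_n\}$ does the real work.

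First I would fix an arbitrary $x \in A$. The hypothesis $|f_n(x)| \le M_n$ for all $n$, combined with $\sum_{n=1}^\infty M_n < \infty$, places us exactly in the setting of the comparison test for series of nonnegative terms: the partial sums of $\sum_{n=1}^\infty |f_n(x)|$ are bounded above by $\sum_{n=1}^\infty M_n$, so this series of absolute values converges. Hence $\sum_{n=1}^\infty f_n(x)$ converges absolutely at each $x \in A$, and the sum function $S(x) = \sum_{n=1}^\infty f_n(x)$ is well defined on all of $A$.

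Next I would estimate the tail uniformly. Writing $S_N(x) = \sum_{n=1}^N f_n(x)$ for the $N$-th partial sum, the triangle inequality together with the hypothesis gives, for every $x \in A$,
\[
|S(x) - S_N(x)| = \Big|\sum_{n=N+1}^\infty f_n(x)\Big| \le \sum_{n=N+1}^\infty |f_n(x)| \le \sum_{n=N+1}^\infty M_n =: R_N.
\]
The decisive feature of this chain of inequalities is that the final bound $R_N$ does not involve $x$ at all. Since $R_N$ is the tail of the convergent series $\sum_{n=1}^\infty M_n$, we have $R_N \to 0$ as $N \to \infty$, and therefore $\sup_{x \in A} |S(x) - S_N(x)| \le R_N \to 0$. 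This is precisely the statement that $S_N \to S$ uniformly on $A$.

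There is no genuine obstacle in this argument; the entire content is the comparison test applied pointwise together with the single observation that the majorizing tail $R_N$ is independent of $x$, which is exactly what converts pointwise convergence into uniform convergence. The only point meriting a word of care is the prior use of completeness of $\mathbb{R}$ (or $\mathbb{C}$) to guarantee that the absolutely convergent series actually possesses a limit $S(x)$, so that the tail estimate above is applied to a genuine sum function rather than to a merely formal object.
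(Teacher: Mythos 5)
Your proof is correct and complete: the comparison test gives pointwise absolute convergence (with completeness of $\mathbb{R}$ or $\mathbb{C}$ guaranteeing the sum $S(x)$ exists), and the tail bound $R_N = \sum_{n>N} M_n$, being independent of $x$ and tending to $0$, upgrades this to uniform convergence. The paper states this lemma as a classical result without proof, so there is nothing to compare against; your argument is the standard one and fills that gap correctly.
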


\begin{lemma}\cite{boh1,boh2}
Let $J$ be an arbitrary compact subset of $T$ and $\{x_n\}_{n \in N}$ is a sequence on $J$ such that $\{x_n\}_{n \in N}$ and $\{x_n^{\Delta}\}_{n \in N}$ are uniformly bounded on $J.$ Then, there exists a subsequence $\{x_{n_k}\}_{n_k \in N}$ which converges uniformly on $J.$
\end{lemma}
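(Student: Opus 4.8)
The plan is to recognize this as the Arzel\`a--Ascoli theorem transported to the time-scale setting, so that the entire argument reduces to two ingredients: \emph{uniform boundedness} (already assumed) together with \emph{equicontinuity} of the family $\{x_n\}$, followed by a diagonal extraction. The only genuinely time-scale--specific step is deducing equicontinuity from the hypothesis on the delta derivatives; once that is in hand, the compactness argument is identical to the classical one, since $J$ is a compact subset of $\mathbb{R}$ and the functions take values in $\mathbb{R}$.

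First I would establish equicontinuity. Set $M:=\sup_{n}\sup_{\tau\in J}|x_n^{\Delta}(\tau)|$, which is finite by the assumed uniform boundedness of $\{x_n^{\Delta}\}$. For $s,t\in J$ with $s\le t$, the fundamental theorem of calculus for the delta integral gives
$$x_n(t)-x_n(s)=\int_s^t x_n^{\Delta}(\tau)\,\Delta\tau,$$
and, using the standard estimate $\left|\int_s^t g(\tau)\,\Delta\tau\right|\le\int_s^t|g(\tau)|\,\Delta\tau$ together with $\int_s^t 1\,\Delta\tau=t-s$, I obtain
$$|x_n(t)-x_n(s)|\le\int_s^t|x_n^{\Delta}(\tau)|\,\Delta\tau\le M\,|t-s|.$$
This Lipschitz bound is uniform in $n$, so given $\epsilon>0$ the choice $\delta=\epsilon/(3M)$ forces $|x_n(t)-x_n(s)|<\epsilon/3$ whenever $|t-s|<\delta$, for \emph{every} $n$; hence $\{x_n\}$ is uniformly equicontinuous. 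Note that at a right-scattered point this reduces to $|x_n(\sigma(t))-x_n(t)|=\mu(t)|x_n^{\Delta}(t)|\le M\,\mu(t)$, so jumps cause no difficulty.

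Next I would carry out the diagonal argument. Since $J\subseteq\mathbb{R}$ is compact it is separable; fix a countable dense set $D=\{\tau_j\}_{j\in\mathbb{N}}\subset J$. Because $\{x_n\}$ is uniformly bounded, $\{x_n(\tau_1)\}$ is a bounded scalar sequence and admits a convergent subsequence; passing successively to subsequences at $\tau_2,\tau_3,\dots$ and taking the diagonal subsequence $\{x_{n_k}\}$ yields a subsequence converging at every point of $D$. To upgrade pointwise convergence on $D$ to uniform convergence on $J$, I would cover $J$ by finitely many $\delta$-neighbourhoods of points of $D$ (possible by compactness and density) and estimate $|x_{n_k}(t)-x_{n_l}(t)|$ by inserting the nearest centre $\tau_j$: two equicontinuity terms bounded by $\epsilon/3$ each, plus $|x_{n_k}(\tau_j)-x_{n_l}(\tau_j)|<\epsilon/3$ for $k,l$ large. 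This shows $\{x_{n_k}\}$ is uniformly Cauchy, hence uniformly convergent on $J$.

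The step I expect to require the most care is the equicontinuity estimate, specifically the legitimacy of $\left|\int_s^t g\,\Delta\tau\right|\le M\,|t-s|$ on a general time scale. The Remark preceding this lemma warns that integral inequalities can fail for certain integrals (for instance the $q$-integral with measure $d_qt$); I would therefore be explicit that the bound is applied to the standard delta integral, for which both $\int_s^t 1\,\Delta\tau=t-s$ and the triangle-type inequality hold, and that it presumes $x_n^{\Delta}$ is rd-continuous (so that the antiderivative representation is valid). With that regularity made precise, the rest is routine.
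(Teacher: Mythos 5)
The paper does not prove this lemma; it is quoted from \cite{boh1,boh2} and used as a black box, so there is no in-paper argument to compare against. Your proposal is the standard Arzel\`a--Ascoli proof (equicontinuity from the derivative bound, then diagonal extraction over a countable dense set, then a three-term $\epsilon/3$ upgrade to uniform Cauchyness), which is exactly how the cited sources establish the result, and it is essentially correct.

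Two refinements would tighten the one step you flag as delicate. First, the identity $x_n(t)-x_n(s)=\int_s^t x_n^{\Delta}(\tau)\,\Delta\tau$ presupposes that $x_n^{\Delta}$ is rd-continuous (or at least delta-integrable with the fundamental theorem applicable), which the lemma's hypotheses do not grant: they only assert existence and uniform boundedness of the delta derivatives. You can avoid this entirely by invoking the time-scale mean value inequality (if $|f^{\Delta}(\tau)|\le g^{\Delta}(\tau)$ on $[s,t]^k$ then $|f(t)-f(s)|\le g(t)-g(s)$; take $g(\tau)=M\tau$), which yields $|x_n(t)-x_n(s)|\le M|t-s|$ with no integrability assumption. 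Second, since $J$ is an \emph{arbitrary} compact subset of $\mathbb{T}$, two points $s,t\in J$ may be separated by points of $\mathbb{T}$ lying outside $J$, where the hypothesis gives no control on $x_n^{\Delta}$; the Lipschitz estimate really requires the derivative bound on the full time-scale interval $[s,t]\cap\mathbb{T}$. This is arguably an imprecision in the lemma's statement rather than in your proof, but you should either assume $J$ is a time-scale interval or restrict the mean value estimate accordingly. With those points made explicit, the compactness and diagonalization portion is routine and correct as you describe it.
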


\begin{lemma}\cite{li1}
Let $\{x_n(t)\}_{n \in N}$ converges uniformly to $x(t)$ on $[a,b]\cap T$ and each $x_n(t)$ is continuous on $[a,b]\cap \mathbb{T}.$ Then the function $x(t)$ is continuous on  $[a,b]\cap \mathbb{T}$ and $$\lim_{n \rightarrow \infty} \int_{[a,b]\cap \mathbb{T}} x_n(s)\Delta s=\int_{[a,b]\cap \mathbb{T}} \lim_{n \rightarrow \infty}x_n(s)\Delta s=\int_{[a,b]\cap \mathbb{T}} x(s)\Delta s.$$
\end{lemma}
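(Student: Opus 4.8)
The plan is to treat the two assertions separately: first the continuity of the limit function $x$, and then the passage of the limit through the delta integral. Both parts follow the classical real-variable arguments, adapted to the subspace topology that $\mathbb{T}$ inherits from $\mathbb{R}$, and neither requires any machinery beyond uniform convergence and the elementary properties of the delta integral.

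First I would establish continuity of $x$ by the standard $\epsilon/3$ argument. Fix $t_0 \in [a,b]\cap\mathbb{T}$ and $\epsilon > 0$. Uniform convergence supplies an index $N$ with $|x_n(s) - x(s)| < \epsilon/3$ for every $s \in [a,b]\cap\mathbb{T}$ and every $n \geq N$. Since $x_N$ is continuous at $t_0$, there is a $\delta > 0$ such that $|x_N(t) - x_N(t_0)| < \epsilon/3$ for all $t \in [a,b]\cap\mathbb{T}$ with $|t - t_0| < \delta$. Inserting $x_N$ via the triangle inequality,
\[
|x(t) - x(t_0)| \le |x(t) - x_N(t)| + |x_N(t) - x_N(t_0)| + |x_N(t_0) - x(t_0)| < \epsilon,
\]
so $x$ is continuous at $t_0$. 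In particular $x$ is rd-continuous and hence delta-integrable on the compact set $[a,b]\cap\mathbb{T}$, which guarantees that the right-hand integral in the statement exists.

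For the interchange of limit and integral, I would use the linearity and monotonicity of the delta integral together with the uniform bound. Given $\epsilon > 0$, uniform convergence provides $N$ with $|x_n(s) - x(s)| < \epsilon$ for all $s \in [a,b]\cap\mathbb{T}$ and $n \geq N$, whence
\[
\left| \int_{[a,b]\cap\mathbb{T}} x_n(s)\,\Delta s - \int_{[a,b]\cap\mathbb{T}} x(s)\,\Delta s \right|
\le \int_{[a,b]\cap\mathbb{T}} |x_n(s) - x(s)|\,\Delta s
\le \epsilon\,(b-a),
\]
where I have used the elementary time-scale identity $\int_{[a,b]\cap\mathbb{T}} \Delta s = b - a$ (take $F(t)=t$, so $F^{\Delta}=1$). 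Since $\epsilon > 0$ is arbitrary, the left-hand side tends to $0$ as $n \to \infty$, which is precisely the desired convergence; the middle equality in the statement is simply the definition $x = \lim_n x_n$.

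The argument is routine, and the only points requiring genuine care are structural rather than computational. One must invoke that rd-continuity (here a consequence of the just-proved continuity) implies delta-integrability, so that all three integrals are well defined before any manipulation. One must also confirm that the $\epsilon/3$ continuity step is insensitive to whether $t_0$ is right-dense or right-scattered; this causes no difficulty because the estimate is carried out in the relative topology of $\mathbb{T}$, where at a right-scattered point the neighbourhood condition is met trivially. Thus the only real obstacle is the bookkeeping of ensuring every integral in the chain is legitimate, after which the uniform bound does all the work.
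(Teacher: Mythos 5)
Your proof is correct. Note that the paper itself offers no proof of this lemma --- it is quoted from the cited reference \cite{li1} as a known result --- so there is nothing to compare against; your argument is the standard one (the $\epsilon/3$ estimate for continuity of the uniform limit, followed by the bound $\left|\int_{[a,b]\cap\mathbb{T}} (x_n-x)\,\Delta s\right|\le \epsilon\int_{[a,b]\cap\mathbb{T}}\Delta s=\epsilon(b-a)$), and it transfers to the time-scale setting exactly as you describe, since continuity implies rd-continuity and hence delta-integrability on the compact set $[a,b]\cap\mathbb{T}$.
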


\begin{lemma}\cite{li1}
Let $\{x_n(t)\}_{n \in N}$ converges uniformly to $x(t)$ on $[a,b]\cap T$ and for each $n\in N,$ $x_n(t)$ has continuous delta derivative $x_n^{\Delta}(t).$ Moreover if $x_n^{\Delta}(t)$ converges uniformly to $y(t),$ then $x^{\Delta}(t)=y(t),$ and $x_n(t)$ converges to $x(t)$ uniformly on $[a,b]\cap \mathbb{T}.$
\end{lemma}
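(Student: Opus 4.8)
The plan is to reduce the statement to the Fundamental Theorem of Calculus on time scales combined with the interchange of limit and delta integral supplied by the preceding lemma. First I would fix a base point $t_0 \in [a,b]\cap\mathbb{T}$ and, since each $x_n$ possesses a continuous delta derivative $x_n^{\Delta}$, invoke the antiderivative relation of Definition 2.3 to write
\[
x_n(t) = x_n(t_0) + \int_{t_0}^{t} x_n^{\Delta}(s)\,\Delta s, \qquad t \in [a,b]\cap\mathbb{T}.
\]
This integral representation is the right vehicle, because it converts a statement about delta derivatives into one about delta integrals, where uniform convergence is well behaved.

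Next I would pass to the limit $n \to \infty$ in this identity. On the left, uniform (hence pointwise) convergence gives $x_n(t) \to x(t)$ and $x_n(t_0) \to x(t_0)$. On the right, each $x_n^{\Delta}$ is continuous and $x_n^{\Delta} \to y$ uniformly, so Lemma 2.9 applies: it simultaneously guarantees that the limit $y$ is continuous on $[a,b]\cap\mathbb{T}$ and that the limit may be taken inside the delta integral, yielding $\int_{t_0}^{t} x_n^{\Delta}(s)\,\Delta s \to \int_{t_0}^{t} y(s)\,\Delta s$. Combining these limits produces the representation
\[
x(t) = x(t_0) + \int_{t_0}^{t} y(s)\,\Delta s.
\]

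To finish, I would differentiate this identity. Since $y$ is continuous, the characterization of the delta integral as an antiderivative (Definition 2.3) shows that $t \mapsto \int_{t_0}^{t} y(s)\,\Delta s$ is delta differentiable with delta derivative $y(t)$; hence $x$ is delta differentiable and $x^{\Delta}(t) = y(t)$, as claimed. The assertion that $x_n \to x$ uniformly is already part of the hypotheses, so no additional argument is required for it.

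I expect the main obstacle to be the careful deployment of the two directions of the Fundamental Theorem of Calculus in the time scale setting and the verification of the hypotheses of Lemma 2.9 — specifically, confirming that continuity of each $x_n^{\Delta}$ together with uniform convergence is exactly what licenses both the interchange of limit and delta integral and the continuity of $y$ that is needed to differentiate the integral back. Beyond that, the argument is a routine transcription of the classical real-variable proof, with delta integrals and delta derivatives replacing the ordinary ones.
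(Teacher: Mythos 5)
Your argument is correct: representing $x_n(t)=x_n(t_0)+\int_{t_0}^{t}x_n^{\Delta}(s)\,\Delta s$ via the time-scale fundamental theorem, passing to the limit with the preceding lemma on interchanging limits and delta integrals, and then delta-differentiating $x(t)=x(t_0)+\int_{t_0}^{t}y(s)\,\Delta s$ using the continuity of $y$ is exactly the standard proof of this result. Note that the paper itself gives no proof --- the lemma is quoted from the cited reference --- so there is nothing to compare against, but your route is the expected one and all the hypotheses you need (continuity of each $x_n^{\Delta}$, uniform convergence of both sequences) are correctly deployed.
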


The following Gronwall-Bellman inequality for time scale is established in \cite{boh1}:
\begin{theorem}
Let $y \in C_{rd}(\mathbb{T},\mathbb{R})$ and $p\in C_{rd}(\mathbb{T},\mathbb{R})$ such that $p(t)\ge 0, \ 1+\mu(t)p(t)>0.$ Then
$$y(t)\le \alpha+\int^t_{t_0}y(s)p(s)\Delta s$$ implies
$y(t)\le \alpha e_p(t,t_0),$ for $t \in \mathbb{T}, \ t \ge t_0.$
\end{theorem}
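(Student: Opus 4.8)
The plan is to mimic the classical integrating-factor proof of Gronwall's inequality, adapted to the delta calculus. First I would introduce the auxiliary function
\[
z(t) = \alpha + \int_{t_0}^{t} y(s)\,p(s)\,\Delta s,
\]
so that the hypothesis reads simply $y(t)\le z(t)$. Delta-differentiating gives $z^{\Delta}(t)=y(t)p(t)$, and since $p(t)\ge 0$ and $y(t)\le z(t)$ this yields the differential inequality $z^{\Delta}(t)-p(t)z(t)=p(t)\bigl(y(t)-z(t)\bigr)\le 0$.

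The heart of the argument is to annihilate the left-hand side with the correct integrating factor. On a time scale this factor is $e_{\ominus p}(\cdot,t_0)$, the reciprocal of $e_p(\cdot,t_0)$, where $\ominus p=-p/(1+\mu p)$. Because $1+\mu p>0$ one checks that $1+\mu(\ominus p)=1/(1+\mu p)>0$, so $\ominus p\in Reg^{+}$ and hence $e_{\ominus p}(\cdot,t_0)>0$. Setting $w(t)=z(t)\,e_{\ominus p}(t,t_0)$ and applying the time-scale product rule $(uv)^{\Delta}=u^{\Delta}v+u^{\sigma}v^{\Delta}$ together with the identities $z^{\sigma}=z+\mu z^{\Delta}$ and $(e_{\ominus p})^{\Delta}=(\ominus p)\,e_{\ominus p}$, I would compute $w^{\Delta}$ explicitly.

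The one computational step that must be handled with care — and the only place the graininess $\mu$ enters nontrivially — is the algebraic simplification of $w^{\Delta}$. Collecting the terms produces the combination $1-\mu p/(1+\mu p)=1/(1+\mu p)$, and the whole expression collapses to
\[
w^{\Delta}(t)=\frac{e_{\ominus p}(t,t_0)}{1+\mu(t)p(t)}\bigl(z^{\Delta}(t)-p(t)z(t)\bigr).
\]
This is the crux: both prefactors $e_{\ominus p}(t,t_0)$ and $1/(1+\mu(t)p(t))$ are strictly positive, while the bracket is $\le 0$ by the differential inequality above, so $w^{\Delta}(t)\le 0$ on $[t_0,\infty)\cap\mathbb{T}$.

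Finally, I would delta-integrate from $t_0$ to $t$. Since $w^{\Delta}\le 0$ we obtain $w(t)\le w(t_0)=z(t_0)\,e_{\ominus p}(t_0,t_0)=\alpha$, that is, $z(t)\,e_{\ominus p}(t,t_0)\le\alpha$. Dividing by the positive quantity $e_{\ominus p}(t,t_0)$ and using $1/e_{\ominus p}(t,t_0)=e_p(t,t_0)$ gives $z(t)\le\alpha\,e_p(t,t_0)$, and combining with $y(t)\le z(t)$ delivers the claimed bound $y(t)\le\alpha\,e_p(t,t_0)$ for all $t\ge t_0$. I expect the product-rule bookkeeping and the $1/(1+\mu p)$ simplification to be the main (indeed essentially the only) obstacle; every other step is a direct transcription of the continuous argument.
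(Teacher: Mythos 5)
Your proof is correct: the integrating-factor computation with $e_{\ominus p}(\cdot,t_0)$ collapses exactly as you claim, and the delicate points (regressivity of $\ominus p$, positivity of the exponentials, the simplification $1+\mu(\ominus p)=1/(1+\mu p)$, and the identity $e_{\ominus p}=1/e_p$) are all handled properly. The paper itself gives no proof of this statement---it cites the result from Bohner and Peterson---and your argument is precisely the standard one from that reference (their comparison theorem applied to $z^{\Delta}(t)-p(t)z(t)\le 0$ for $z(t)=\alpha+\int_{t_0}^{t}y(s)p(s)\Delta s$), so it matches the intended proof.
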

Here the result still holds if $\alpha$ is replaced by any $f \in C_{rd}(\mathbb{T},\mathbb{R}).$ For more details see \cite{boh1}.
\section{Existence and Uniqueness}

The integral form of the equation (\ref{meq}) is given by
$$x(t)=x_0+\int_{[t_0,t]\cap \mathbb{T}} f(s,x(s))\Delta s.$$
Let us define $D=\{(t,x): t \in [t_0-a, t_0+a]\cap \mathbb{T}, \ |x-x_0| \le b\}$
\begin{theorem}(Picard-Lindel\"{o}f)\label{mthm}
Let $f$ be a function from $D$ to $\mathbb{R},$ rd-continuous in $t$ and Lipschitz in $x$ with Lipschitz constant $L$. Furthermore, if an initial point is not a right-dense point of $\mathbb{T}$, then there exists $h=\min\{a, \frac{b}{M}\},$ such that the problem has a unique solution in the interval $[t_0-h,t_0+h]\cap \mathbb{T},$ where $M=\max_{D}|f(t,x)|.$
\end{theorem}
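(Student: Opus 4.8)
The plan is to construct the solution by Picard's method of successive approximations and to establish its convergence through the Weierstrass $M$-test, exactly as advertised in the abstract. Starting from the integral formulation $x(t)=x_0+\int_{[t_0,t]\cap\mathbb{T}}f(s,x(s))\Delta s$, I would define the iterates
$$\phi_0(t)=x_0, \qquad \phi_{n+1}(t)=x_0+\int_{[t_0,t]\cap\mathbb{T}}f(s,\phi_n(s))\Delta s,$$
and first verify by induction that every $\phi_n$ is well defined and remains in $D$ on $[t_0-h,t_0+h]\cap\mathbb{T}$: since $|f|\le M$ on $D$, one obtains $|\phi_{n+1}(t)-x_0|\le M|t-t_0|\le Mh\le b$, which is precisely where the choice $h=\min\{a,b/M\}$ is used. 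The hypothesis that $t_0$ is not right-dense enters here to guarantee that the iteration starts cleanly: $t_0$ is then right-scattered, so the local structure near $t_0$ is discrete, and I would lean on this to keep the two-sided estimate on $[t_0-h,t_0+h]$ uniform.

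The core of the argument is the telescoping bound. Using the Lipschitz condition $|f(s,u)-f(s,v)|\le L|u-v|$ together with the generalized time-scale monomials $h_k(\cdot,t_0)$, defined by $h_0\equiv 1$ and $h_{k+1}(t,t_0)=\int_{[t_0,t]\cap\mathbb{T}}h_k(s,t_0)\Delta s$ and satisfying the standard estimate $0\le h_k(t,t_0)\le \frac{(t-t_0)^k}{k!}$ for $t\ge t_0$, I would prove by induction that
$$|\phi_{n+1}(t)-\phi_n(t)|\le M L^n h_{n+1}(t,t_0)\le \frac{M L^n h^{n+1}}{(n+1)!}=:M_n.$$
Since $\sum_{n\ge 0} M_n=\frac{M}{L}\bigl(e^{Lh}-1\bigr)<\infty$, the Weierstrass $M$-test applies to the telescoping series $\phi_0+\sum_{n\ge 0}(\phi_{n+1}-\phi_n)$, and hence $\phi_n$ converges uniformly on $[t_0-h,t_0+h]\cap\mathbb{T}$ to a limit $x$.

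It remains to identify $x$ as the solution and to prove uniqueness. By the lemma on uniform limits of continuous functions, $x$ is continuous and stays in $D$; the Lipschitz bound $|f(s,\phi_n(s))-f(s,x(s))|\le L|\phi_n(s)-x(s)|$ shows that $f(\cdot,\phi_n(\cdot))\to f(\cdot,x(\cdot))$ uniformly, so by the lemma permitting interchange of the limit with the $\Delta$-integral I may pass to the limit in the iteration to obtain the integral equation; since the integrand is rd-continuous, delta-differentiation then recovers (\ref{meq}). For uniqueness, if $x$ and $y$ are two solutions, then $|x(t)-y(t)|\le L\int_{[t_0,t]\cap\mathbb{T}}|x(s)-y(s)|\Delta s$, and the Gronwall--Bellman inequality with $\alpha=0$ forces $x\equiv y$.

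I expect the main obstacle to be the time-scale monomial estimate $h_k(t,t_0)\le (t-t_0)^k/k!$ and the induction that produces the summable majorant $M_n$; unlike the classical case there is no closed-form power, so the inductive step must be driven entirely by the defining recursion of $h_k$ together with the Lipschitz constant. A secondary subtlety is the precise role of the not-right-dense hypothesis and the two-sided (backward as well as forward) integration on $[t_0-h,t_0+h]$, together with the verification that term-by-term delta-differentiation is legitimate via the stated uniform-convergence lemma.
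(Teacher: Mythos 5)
Your proposal follows essentially the same route as the paper's own proof: Picard iterates kept in $D$ via the bound $M$ and the choice of $h$, the telescoping estimate $|\phi_{n+1}-\phi_n|\le ML^nh_{n+1}(t,t_0)\le ML^nh^{n+1}/(n+1)!$, uniform convergence by the Weierstrass $M$-test, passage to the limit in the integral equation via the Lipschitz condition and the interchange lemma, and uniqueness by Gronwall--Bellman with $\alpha=0$. The argument is correct and complete to the same standard as the paper's (indeed your majorant $ML^nh^{n+1}/(n+1)!$ is cleaner than the paper's $ML^{n-1}(a-t_0)^n/n!$), so no further comparison is needed.
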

\begin{proof} We define Picard's iterations
\begin{eqnarray}
\phi_0(t)&=& x_0 \nonumber \\
\phi_1(t)&=& x_0+\int_{[t_0,t]\cap \mathbb{T}} f(s,\phi_0(s))\Delta s \nonumber \\
 && \vdots \nonumber \\
\phi_n(t)&=& x_0+\int_{[t_0,t]\cap \mathbb{T}} f(s,\phi_{n-1}(s))\Delta s \nonumber \\
&& \cdot
\end{eqnarray}
It is enough to prove the result in the interval $[t_0,t_0+h]\cap \mathbb{T}.$ Now we prove the above theorem in four steps. \\
Step-1: (Well-posedness) First we need to show that $(t, \phi_n(t)) \in D,$ when $t\in [t_0,t_0+h]\cap \mathbb{T}.$ For $n=1, \ |\phi_1(t)-x_0| \le \int_{[t_0,t]\cap \mathbb{T}} |f(s,\phi_0(s))|\Delta s.$ But since $(t,\phi_0(t)) \in D,$ we get continuity of $f(t,\phi_0(t))$ on $D,$ which implies boundedness. Hence, $|\phi_1(t)-x_0| \le \int_{[t_0,t]\cap \mathbb{T}} M \Delta s \le M |t-t_0| \le Mh \le b.$ Now assume that for $n-1$ we have $|\phi_{n-1}(t)-x_0| \le b,$ we will prove $|\phi_n(t)-x_0| \le b.$ We have
$$|\phi_n(t)-x_0| \le \int_{[t_0,t]\cap \mathbb{T}} |f(s,\phi_{n-1}(s))|\Delta s.$$ Since $(t, \phi_{n-1}(t)) \in D,$ we have boundedness of $f(t, \phi_{n-1}(t)).$ Thus, we obtain
$$|\phi_n(t)-x_0| \le \int_{[t_0,t]\cap \mathbb{T}} M\Delta s \le M|t-t_0| \le Mh \le b.$$ Hence $(t, \phi_{n}(t)) \in D$ for each $t \in [t_0,t_0+h]\cap \mathbb{T}.$
\\
Step-2: (Estimate) We want to show here that $|\phi_n(t)-\phi_{n-1}(t)| \le M L^{n-1} \frac{(t-t_0)^n}{n!}=M L^{n-1}h_n(t,t_0).$ Again we use mathematical induction to establish this result. It is easy to verify for $n=1.$ For $n,$ let us compute,
\begin{eqnarray}
|\phi_n(t)-\phi_{n-1}(t)| &\le & \int_{[t_0,t]\cap \mathbb{T}} |f(s, \phi_{n-1}(s))-f(s,\phi_{n-2}(s))|\Delta s \nonumber \\
& \le & L \int_{[t_0,t]\cap \mathbb{T}} |\phi_{n-1}(s)-\phi_{n-2}(s)|\Delta s \nonumber \\
& \le & L \int_{[t_0,t]\cap \mathbb{T}} ML^{n-2}h_{n-1}(s,t_0)\Delta s \nonumber \\
& = & M L^{n-1}h_n(t,t_0).
\end{eqnarray}
Step-3: (Convergence) We can write $\phi_n(t)=x_0+\sum_{i=1}^n (\phi_i(t)-\phi_{i-1}(t)).$ Using the above estimate, we obtain $$|\phi_n(t)-\phi_{n-1}(t)| \le M L^{n-1}h_n(t,t_0) \le M L^{n-1}h_n(a,t_0) \le M L^{n-1}\frac{(a-t_0)^n}{n!},$$ hence the series $\sum_{n=1}^{\infty}(\phi_n(t)-\phi_{n-1}(t))$ converges uniformly and absolutely on $t \in [t_0,t_0+h]\cap \mathbb{T}.$ Hence $\phi_n(t)$ converges absolutely and uniformly to a function $\phi(t).$
Now taking limit on both sides of the integral equation (3.1), we obtain
$$\phi(t)=\lim_{n \rightarrow \infty}\phi_n(t)=x_0+\lim_{n \rightarrow \infty}\int_{[t_0,t]\cap \mathbb{T}} f(s,\phi_{n-1}(s))\Delta s.$$ Since $f$ is continuous as the convergence is uniform on $[t_0,t_0+h]\cap \mathbb{T},$ we can take the limit inside the integral, which gives $\phi(t)=x_0+\int_{[t_0,t]\cap \mathbb{T}} \lim_{n \rightarrow \infty}f(s,\phi_{n-1}(s))\Delta s.$ Since $f$ is Lipschitz, we get $\lim_{n \rightarrow \infty}f(t,\phi_{n}(t))=f(t,\phi(t)).$ Thus $\phi(t)$ satisfies $\phi(t)=x_0+\int_{[t_0,t]\cap \mathbb{T}} f(s,\phi(s))\Delta s.$ \\
Step-4: (Uniqueness) Let $\phi,\psi$ are two solutions. Define $\Phi(t)=|\phi(t)-\psi(t)|.$ It is easy to see here that $\Phi(t_0)=0.$ Now we have
\begin{eqnarray}
\Phi(t) &\le & \int_{[t_0,t]\cap \mathbb{T}} |f(s,\phi(s))-f(s,\psi(s))|\Delta s \le L \int_{[t_0,t]\cap \mathbb{T}} |\phi(s)-\psi(s)|\Delta s \le L \int_{[t_0,t]\cap \mathbb{T}} \Phi(s)\Delta s \nonumber \\ && \le L \int_{[t_0,\sigma(t)]\cap \mathbb{T}} \Phi(s)\Delta s.
\end{eqnarray}
The above inequality is equivalent to $$\Phi(t)-L\int_{[t_0,\sigma(t)]\cap \mathbb{T}} \Phi(s)\Delta s \le 0.$$
Hence using Gronwall-Bellman inequality, we get $\Phi=0$ on $[t_0,t_0+h]\cap \mathbb{T}.$ This completes the proof.
\end{proof}

To show that in many situations the Lipschitz condition is not necessary for uniqueness, let us consider
$$x^{\Delta}=f(x), \ x(t_0)=x_0,$$ where $f:\mathbb{R} \rightarrow \mathbb{R}$ is positive and continuous for all $x\in \mathbb{R}.$ To see the uniqueness, let us define
$F(t)=x_0+\int_{[t_0,t]\cap \mathbb{T}} \frac{1}{f(x(s))}\Delta s.$ Let us denote $\lim_{t \rightarrow \pm \infty} F(t)=l_{\pm}.$ Then $F$ is one-to-one. Moreover $F$ is continuously differentiable and $F^{\Delta}(t) >0.$ This observation implies the existence of a rd-continuously differentiable inverse from $G:(l_{-}, l_{+}) \rightarrow \mathbb{R}.$ The observation $G^{\Delta}(t)=\frac{1}{F^{\Delta}(F^{-1}(t))}=f(F^{-1}(t))=f(G(t))$ yields that $G$ is a solution of our equation. Now let us assume that $H$ is any other solution from $(t_0,x_0),$ which gives $\frac{H^{\Delta}(t)}{f(H(t))}=1=(F(H(t)))^{\Delta}$ for all $t\in [a,b]\cap \mathbb{T}.$ Hence, $F(H(t))=t+c,$ so $t_0+c=F(H(t_0))=F(x_0)=t_0,$ which implies $c=0.$ The above analysis yields $F(G(t))=F(H(t))=t,$ and since $F$ is one to one, we obtain $G=H.$ Hence, we achieve uniqueness without using the Lipschitz condition.

\begin{theorem}
Let $f(t,x)$ is rd-continuous on $B_0=\{(t,x): t \in [t_0,t_0+a]\cap \mathbb{T}, \ |x-x_0| \le b\}.$ Let $M=\max_{B_0}|f(t,x)|, h=\min\{a, \frac{b}{M}\}.$ Then for $\epsilon>0,$ there exists an $\epsilon-$approximate solution of (\ref{meq}) on $[t_0,t_0+h]\cap \mathbb{T}.$
\end{theorem}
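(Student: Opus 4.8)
My plan is to adapt the classical Euler--Cauchy polygon construction that underlies Peano's theorem to the time scale $\mathbb{T}$. The crucial feature that makes this work on $\mathbb{T}$ is that the delta derivative of an affine function $t\mapsto c+k(t-\tau)$ is exactly its slope $k$, even across a right-scattered point, since the difference quotient $(x(\sigma(t))-x(t))/\mu(t)$ collapses to $k$. Consequently, if I build the approximation with constant delta-slope on each piece, the approximation error will vanish identically at right-scattered points, and all the genuine analysis will be concentrated on the dense part of the interval.

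First I would extract the oscillation control. Since $B_0$ is compact and $f$ is rd-continuous, $f$ is bounded (yielding $M$) and, being regulated, it admits for the prescribed $\epsilon$ a threshold $\delta>0$ such that $|f(t,x)-f(s,y)|\le\epsilon$ whenever $|t-s|\le\delta$, $|x-y|\le\delta$, and $(t,x),(s,y)$ lie on a common continuity stretch of $B_0$. I set $\eta=\min\{\delta,\delta/M\}$ and build a finite partition $t_0=\tau_0<\tau_1<\cdots<\tau_m=t_0+h$ of $[t_0,t_0+h]\cap\mathbb{T}$ as follows: on dense stretches and across small gaps I take $\tau_{i+1}=\sup(\mathbb{T}\cap[\tau_i,\tau_i+\eta])$, so that $\tau_{i+1}-\tau_i\le\eta$; at a right-scattered point whose jump exceeds $\eta$ I take the whole jump as a single step, $\tau_{i+1}=\sigma(\tau_i)$. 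Finiteness follows from compactness of $[t_0,t_0+h]\cap\mathbb{T}$ together with the fact that only finitely many gaps can have length exceeding $\eta$ (their disjoint total length is at most $h$).

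On this partition I define the polygon recursively by $x_\epsilon(\tau_0)=x_0$ and, for $t\in[\tau_i,\tau_{i+1}]\cap\mathbb{T}$, $x_\epsilon(t)=x_\epsilon(\tau_i)+f(\tau_i,x_\epsilon(\tau_i))\,(t-\tau_i)$. A direct computation then gives $x_\epsilon^{\Delta}(t)=f(\tau_i,x_\epsilon(\tau_i))$ throughout each step, so $x_\epsilon^{\Delta}$ is piecewise constant with discontinuities only at the finitely many nodes; taking $S=\{\tau_0,\dots,\tau_m\}$ establishes condition (ii). Condition (i) follows because the slopes are bounded by $M$ and $h\le b/M$, whence $|x_\epsilon(t)-x_0|\le Mh\le b$, so $(t,x_\epsilon(t))\in B_0$. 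For condition (iii), on a small step I estimate $|x_\epsilon^{\Delta}(t)-f(t,x_\epsilon(t))|=|f(\tau_i,x_\epsilon(\tau_i))-f(t,x_\epsilon(t))|\le\epsilon$ using $|t-\tau_i|\le\eta\le\delta$ and $|x_\epsilon(t)-x_\epsilon(\tau_i)|\le M\eta\le\delta$; on a large right-scattered step the only point of $\mathbb{T}\setminus S$ in question is $\tau_i$, where the identity is exact.

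The main obstacle is the interplay between rd-continuity and the scattered structure in the first two steps. Unlike the classical case, $f$ need not be genuinely uniformly continuous: it may jump at left-dense, right-scattered points, so the naive modulus argument fails there. The remedy is to absorb the finitely many points at which the oscillation of $f$ exceeds $\epsilon$ into the node set $S$ (legitimate because an rd-continuous function is regulated, hence for each $\epsilon$ admits a finite partition on whose pieces the oscillation is at most $\epsilon$), and to verify carefully that the delta-derivative identity $x_\epsilon^{\Delta}=f(\tau_i,x_\epsilon(\tau_i))$ really does persist across right-scattered points inside a step. Making the partition, the exceptional set $S$, and the error estimate fit together consistently is the crux of the argument; the remaining verifications are routine.
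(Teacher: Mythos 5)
Your proposal is correct and follows essentially the same route as the paper: an Euler--Cauchy polygon with constant delta-slope $f(\tau_i,x(\tau_i))$ on each subinterval of mesh at most $\min\{\delta,\delta/M\}$, with conditions (i)--(iii) verified exactly as you describe. The one point where you go beyond the paper is your handling of the oscillation of $f$ at left-dense, right-scattered points via the regulated-function argument; the paper simply asserts ``rd-uniform continuity'' on the compact set $B_0$ and does not address this subtlety, so your extra care is a genuine (and welcome) tightening rather than a different method.
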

\begin{proof} Since the function $f(t,x)$ is rd-continuous on a compact set $B_0,$ we can claim rd-uniform continuity on this set. Hence, for each $\epsilon>0,$ there exists a $\delta>0$ such that
$$|f(t,x)-f(s,y)| \le \epsilon,$$ whenever
$|t-s| \le \delta$ and $|x-x_0| \le \delta,$ for $(t,x), (s,y) \in B_0.$
The interval $[t_0, t_0+h] \cap \mathbb{T}$ can be divided into $n$ subintervals
$$t_0 <t_1<t_2<\cdots <t_n=t_0+h,$$ where each subinterval is taken as intersection with the time scale $\mathbb{T}.$ It is easy to note that $\max|t_k-t_{k-1}| \le \min\{\delta, \frac{\delta}{M}\}.$ Now we define a function $x(t)$ such that
$$x(\sigma(t))=
\begin{cases}
x_0, \quad t=t_0 \\
x(t_{k-1})+(\sigma(t)-t_{k-1})f(t_{k-1},x(t_{k-1})), \quad t \in (t_{k-1}, t_k]\cap \mathbb{T},
\end{cases}
k=1,2,\cdots, n.$$
It is easy to verify that $x(t)$ is rd-continuously differentiable function on the interval $[t_0,t_0+h]\cap \mathbb{T}.$ Further, $$|x(t)-x(s)| \le M |t-s|, \quad t,s \in [t_0, t_0+h]\cap \mathbb{T}.$$ Now for $t\in (t_{k-1},t_k)\cap \mathbb{T},$ it follows that $|x(t)-x(t_{k-1})| \le \delta.$ Combining all the above arguments, we have
$$|x^{\Delta}(t)-f(t,x(t))|=|f(t_{k-1},x(t_{k-1}))-f(t,x(t))| \le \epsilon,$$
which implies the existence of $\epsilon-$approximate solution. This completes the proof.
\end{proof}

\begin{remark}
For $x \in \mathbb{R}^n,$ the same proof will work by replacing modulus by Euclidean norm.
\end{remark}
\begin{example}
Consider the following equation on $T,$
$x^{\Delta}=\sqrt{x}, \ x(0)=0.$ If $\mathbb{T}=\mathbb{R},$ then equation becomes $x^{\prime}=\sqrt{x}, \ x(0)=0,$ which has more than one solution, namely $x(t)=0$ and $x(t)=\frac{t^2}{4}$ and combination of theses. Further when $\mathbb{T}=\mathbb{Z},$ the equation becomes $\Delta x=\sqrt{x}, \ x(0)=0,$ we have $x=0$ is solution. This suggests that Lipschitz condition is required for uniqueness. At the same time if we drop the Lipschitz condition, then we have at least one solution. Similarly for $\mathbb{T}=\mathbb{N},$ Cantor set etc such that $\mu>0$, we have unique solution. For the time scale $\mathbb{T}=\{q^n: n \in \mathbb{N}\}\cup \{0\}$ such that $q>0,$ then $x^{\Delta}=\sqrt{x}, \ x(0)=0$ has unique solution $x=0$. So, we conclude that the condition of the Picard-Lindel\"{o}f is sufficient but not necessary for the existence of unique solution on any time scale $\mathbb{T}.$ Also the nature changes when we move from $\mathbb{R}$ to any other time scale.
\end{example}
\begin{remark}
One can also show the existence of solution of the dynamic equation on time scale with delay. Let us consider
\begin{eqnarray} \label{dmeq}
x^{\Delta}(t) &&= f(t,x(t-\tau)), \quad t \in [a, b]\cap \mathbb{T},
\nonumber \\
x(t)&&= h(t), \quad t \in [a-\tau, a)\cap \mathbb{T}.
\end{eqnarray}
The method is the usual method of steps. In the interval $[a,a+\tau]\cap \mathbb{T},$ one can use Theorem \ref{mthm} to show the existence of solutions and then proceed with other intervals. When $t \in [a,a+\tau]\cap \mathbb{T},$ then $t-\tau \in [a-\tau,a]\cap \mathbb{T}.$ In this case, our equation is $x^{\Delta}(t)= f(t,h(t-\tau)).$ The last equation can be solved easily. Now after getting the solution in the time scale interval $[a,a+\tau]\cap \mathbb{T},$ one can solve it in the interval $[a+\tau,a+2\tau]\cap \mathbb{T}$ and so on.
\end{remark}

\section{Approximation}
In this section, we suppose that $\sup\mathbb{T}=\infty.$ Now, let us consider the the following equations
$$x^{\Delta}(t)=A(t)x(t),$$ where $A(t)$ is $n \times n$ matrix which is continuous. Further, consider the following dynamic equation with delay (DED)
$$y^{\Delta}(t)=A(t)y(t)+f(t,y(t-\tau)), \ \tau>0,$$ with history $\eta(t)$ in the interval $[-\tau,0]\cap \mathbb{T}.$ The corresponding differential equation with piecewise constant argument (DEPCA) is given by
$$z_h^{\Delta}(t)=A(t)z_h(t)+f(t,z_h(\gamma_h(t-\tau))),$$ with history $z_h(nh)=\eta(nh)$ for $n=-k,\cdots,0.$ The step $h=\frac{\tau}{k}$ and $k \ge 1$ is an integer and $\gamma_h(t-\tau)=\Big[\frac{s}{h}-\Big[\frac{\tau}{h}\Big]\Big]h,$ where $[\cdot]$ is greatest integer function.
The solution of the above problem is a function $z_h$ which is continuous on $\mathbb{T}^+=\mathbb{T} \cap [0,\infty)$ and $z_h^{\Delta}(t)$ exists for each $t\in \mathbb{T}^+$ with possible exception on $kh,$ where one sided limit exist and it satisfies DEPCA on each interval $I_k:[kh, (k+1)h]\cap \mathbb{T}.$

Our aim here is to compare the solutions of DED and DEPCA. Since as $h \rightarrow 0, \ [t]_h \rightarrow t,$ uniformly on $\mathbb{T},$ for $0<h\le h_0,$ it is expected that the solutions of both equations show similar qualitative properties. In the interval $I_i,$ the DEPCA can be written as
$$z_h^{\Delta}(t)=A(t)z_h(t)+f(t,z_h(h(i-k))),$$ with the same initial condition. Now we can use the variation of parameter formula (for details, we refer to \cite{boh1,lak}) to express the solution, which is
$$z_h(t)=e_{A(t)}(t,ih)z_h(ih)+\int_{[ih,t]\cap \mathbb{T}}e_{A(t)}(t,\sigma(s))f(s,z_h((i-k)h))\Delta s.$$
Now taking $t \rightarrow (i+1)h,$ we get
$$z_h((i+1)h)=e_{A(t)}((i+1)h,ih)z_h(ih)+\int_{[ih,(i+1)h]\cap \mathbb{T}}e_{A(t)}((i+1)h,\sigma(s))f(s,z_h((i-k)h))\Delta s.$$ Now we define a sequence $a_h(i)=z_h(ih).$ One can easily check that it satisfies the following difference equation
\begin{eqnarray} \label{mdiff}
a_h(n+1)&=& e_{A(t)}((i+1)h,ih)a_h(n)+\int_{[ih,(i+1)h]\cap \mathbb{T}}e_{A(t)}((n+1)h,\sigma(s))f(s,a_h((n-k)))\Delta s, \nonumber \\ a_h(n)&=& \phi(nh),
\end{eqnarray}
 for $n=0,1,\cdots, -\lambda \le -nh \le 0.$
So, we have obtained an approximation of the original problem DED. Now we can compute few values of $a_h$ for $n=0,1,2.$
$$a_h(0)=\phi(0), \ a_h(1)=e_{A(t)}(h,0)a_h(0)+\int_{[0,h]\cap \mathbb{T}}e_{A(t)}(h,\sigma(s))f(s,a_h((-k)))\Delta s,$$ and
\begin{eqnarray}
a_h(2)&=& e_{A(t)}(2h,h)a_h(1)+\int_{[h,2h]\cap \mathbb{T}}e_{A(t)}(2h,\sigma(s))f(s,a_h((1-k)))\Delta s \nonumber \\
&=& e_{A(t)}(2h,h)\Big(e_{A(t)}(h,0)a_h(0)+\int_{[0,h]\cap \mathbb{T}}e_{A(t)}(h,\sigma(s))f(s,a_h((0-k)))\Delta s\Big)\nonumber \\ &&+\int_{[h,2h]\cap T}e_{A(t)}(2h,\sigma(s))f(s,a_h((1-k)))\Delta s \nonumber \\
&=& e_{A(t)}(2h,0)\phi(0)+\int_{[0,h]\cap \mathbb{T}}e_{A(t)}(2h,\sigma(s))f(s,a_h((0-k)))\Delta s\nonumber \\ &&+\int_{[h,2h]\cap \mathbb{T}}e_{A(t)}(2h,\sigma(s))f(s,a_h((1-k)))\Delta s.
\end{eqnarray}
Hence, we obtain the following relation
$$a_h(n)=e_{A(t)}(nh,0)\phi(0)+\sum_{i=0}^{n-1} \int_{[ih,(i+1)h]\cap \mathbb{T}}e_{A(t)}(nh,\sigma(s))f(s,a_h((i-k)))\Delta s.$$ Using the above expressions, we get the following representation of $z_h(t):$
$$z_h(t)=e_{A(t)}(t,nh)a_h(n)+\int_{[nh,t]\cap \mathbb{T}}e_{A(t)}(t,\sigma(s))f(s,a_h((n-k)))\Delta s.$$ Substituting the value of $a_h(n),$ we obtain
\begin{eqnarray}
z_h(t)&=& e_{A(t)}(t,0)\phi(0)+\sum_{i=0}^{n-1}\int_{[ih,(i+1)h]\cap \mathbb{T}}e_{A(t)}(t,\sigma(s))f(s,a_h((i-k)))\Delta s\nonumber \\ &+& \int_{[nh,t]\cap \mathbb{T}}e_{A(t)}(t,\sigma(s))f(s,a_h((n-k)))\Delta s,
\end{eqnarray}
for $nh \le t \le (n+1)h.$

Using the above observation, we can state the following theorem.
\begin{theorem}
Under the assumptions of Theorem $3.1$, the DEPCA with given initial condition has solution of the form
\begin{eqnarray}
z_h(t)&=& e_{A(t)}(t,0)\phi(0)+\sum_{i=0}^{n-1}\int_{[ih,(i+1)h]\cap \mathbb{T}}e_{A(t)}(t,\sigma(s))f(s,a_h((i-k)))\Delta s\nonumber \\ &&+\int_{[nh,t]\cap \mathbb{T}}e_{A(t)}(t,\sigma(s))f(s,a_h((n-k)))\Delta s,
\end{eqnarray}
for $t \in \mathbb{T}^+$ and the sequence $a_h(\cdot)$ satisfies the nonlinear difference equation (\ref{mdiff}).
\end{theorem}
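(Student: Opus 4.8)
The plan is to build the solution mesh-point by mesh-point on the grid $\{nh\}$ and then unwind the resulting recursion into closed form. First I would fix an index $i$ and restrict attention to the subinterval $I_i=[ih,(i+1)h]\cap\mathbb{T}$. On this interval the piecewise constant argument is frozen, $\gamma_h(t-\tau)=(i-k)h$, so the DEPCA collapses to the \emph{linear} inhomogeneous dynamic equation
$$z_h^{\Delta}(t)=A(t)z_h(t)+f(t,z_h((i-k)h)),$$
whose forcing term depends on $s$ only through the fixed nodal value $z_h((i-k)h)$. This is the structural simplification that makes the whole argument elementary: on each $I_i$ we are solving a genuinely linear problem.

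Second, I would apply the variation of parameters formula for dynamic equations on time scales (see \cite{boh1,lak}) to this linear problem on $I_i$ with initial datum $z_h(ih)$, obtaining the local representation
$$z_h(t)=e_{A(t)}(t,ih)z_h(ih)+\int_{[ih,t]\cap\mathbb{T}}e_{A(t)}(t,\sigma(s))f(s,z_h((i-k)h))\Delta s.$$
Evaluating this as $t\to(i+1)h$ and setting $a_h(i)=z_h(ih)$ yields precisely the one-step recursion, which I would then check coincides with the difference equation (\ref{mdiff}); the prescribed history $a_h(n)=\phi(nh)$ for $-k\le n\le 0$ supplies the initialization, and continuity of $z_h$ across the nodes guarantees that the local pieces glue into a single solution on $\mathbb{T}^+$.

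Third, I would establish the closed form for $a_h(n)$ by induction on $n$, telescoping the recursion. The explicit computations already displayed for $n=0,1,2$ indicate the pattern; the inductive step rests on the semigroup (transitivity) property of the matrix exponential,
$$e_{A(t)}((n+1)h,ih)=e_{A(t)}((n+1)h,nh)\,e_{A(t)}(nh,ih),$$
which absorbs each new transition factor into the accumulated kernels $e_{A(t)}(nh,\sigma(s))$. Finally I would substitute the resulting closed form of $a_h(n)$ into the local representation on the terminal interval $[nh,t]$, producing the stated formula for $z_h(t)$.

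I expect the main obstacle to be the bookkeeping in this inductive telescoping. Because $A(t)$ is matrix-valued the transition operators do not commute, so the semigroup property must be invoked in the correct left-to-right order, and the shifted arguments $\sigma(s)$ appearing in the integral kernels must be tracked carefully across breakpoints so that the operator propagating each subinterval's contribution from $(i+1)h$ up to $nh$ (and then on to $t$) matches $e_{A(t)}(t,\sigma(s))$ exactly. Once this propagation is verified, the representation and the fact that $a_h(\cdot)$ solves (\ref{mdiff}) follow immediately, closing the argument.
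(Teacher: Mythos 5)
Your proposal follows essentially the same route as the paper: freeze the piecewise constant argument on each subinterval $I_i$, apply the time-scale variation of parameters formula with initial datum $z_h(ih)$, pass to $t\to(i+1)h$ to obtain the recursion (\ref{mdiff}) for $a_h(i)=z_h(ih)$, unwind it into the closed form for $a_h(n)$, and substitute back into the local representation on $[nh,t]$. The only difference is one of rigor in your favor: the paper infers the closed form for $a_h(n)$ by displaying the cases $n=0,1,2$ and asserting the pattern, whereas you make the induction and the (correctly ordered, noncommutative) semigroup identity $e_{A(t)}((n+1)h,ih)=e_{A(t)}((n+1)h,nh)\,e_{A(t)}(nh,ih)$ explicit.
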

We note the importance of uniqueness of the solution. Otherwise, the approximate method with piecewise constant argument may not work.

Now we establish a connection between exponential stability of the solution of a linear system with the approximation of the solution of DED for large $t.$ This implies that if the zero solution of the linear system is exponentially stable, then the approximate solution tends to solution as $t$ approaches infinity. Here we consider the real delay $\tau$ and $\mathbb{T}^+=\mathbb{T} \cap \mathbb{R}^+.$
Let us assume the following:
\begin{itemize}
\item[A1.] $A(t)$ is regressive and rd-continuous matrix.
\item[A2.]  The fundamental matrix $e_{A(t)}(t,s)$ satisfies $|e_{A(t)}(t,s)| \le M e^{-\lambda (t-s)},$ for $t \ge s$ and $M, \lambda$ are positive constants.
\item[A3.] $f: \mathbb{T}^+ \times \mathbb{R}^n \rightarrow \mathbb{R}^n$ satisfies $f(t,0)=0$ and
$$\|f(t,x)-f(t,y)\| \le L \|x-y\|.$$
\end{itemize}
\begin{lemma}
Under the assumptions $A1-A3,$ if the zero solution of DED is exponentially stable, then the solution satisfies
$$\|y(t-\tau)-y(\gamma_h(t-\tau))\| \le M^* e^{-\lambda s} \ s \ge 2\tau,$$ where
$M^*=Me^{2\lambda \tau} \int_{[\gamma_h(t-\tau), t-\tau]\cap \mathbb{T}} (\|A(s)\|+L e^{\lambda \tau}) \Delta s,$ which tends to zero as $h$ tends to zero.
\end{lemma}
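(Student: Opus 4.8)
The plan is to write the increment $y(t-\tau)-y(\gamma_h(t-\tau))$ as a delta integral of the right-hand side of the DED and then estimate the integrand using the Lipschitz hypothesis A3 together with the exponential decay of $y$. Since $\gamma_h(t-\tau)\le t-\tau$ and both endpoints may be taken in $\mathbb{T}$, the fundamental theorem for the delta integral gives
$$y(t-\tau)-y(\gamma_h(t-\tau))=\int_{[\gamma_h(t-\tau),\,t-\tau]\cap\mathbb{T}}y^{\Delta}(s)\,\Delta s=\int_{[\gamma_h(t-\tau),\,t-\tau]\cap\mathbb{T}}\big(A(s)y(s)+f(s,y(s-\tau))\big)\,\Delta s,$$
after substituting $y^{\Delta}(s)=A(s)y(s)+f(s,y(s-\tau))$. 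Taking norms and applying the triangle inequality reduces the problem to bounding $\|A(s)\|\,\|y(s)\|$ and $\|f(s,y(s-\tau))\|$ on the short window $[\gamma_h(t-\tau),t-\tau]$.

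For the nonlinear term I would invoke A3: since $f(s,0)=0$, we have $\|f(s,y(s-\tau))\|=\|f(s,y(s-\tau))-f(s,0)\|\le L\,\|y(s-\tau)\|$. The assumed exponential stability of the zero solution, reinforced by A2, supplies the decay estimate $\|y(s)\|\le M e^{-\lambda s}$, whence $\|y(s-\tau)\|\le M e^{-\lambda(s-\tau)}=M e^{\lambda\tau}e^{-\lambda s}$. Consequently the integrand is dominated by $M e^{-\lambda s}\big(\|A(s)\|+L e^{\lambda\tau}\big)$. The step that converts the local factor $e^{-\lambda s}$ into the global factor $e^{-\lambda t}$ is the window estimate coming from the greatest-integer definition of $\gamma_h$: writing $\gamma_h(t-\tau)=[t/h]h-[\tau/h]h$ and using $h=\tau/k\le\tau$, one gets $\gamma_h(t-\tau)>t-\tau-h\ge t-2\tau$, so that on the window $e^{-\lambda s}\le e^{-\lambda\gamma_h(t-\tau)}<e^{2\lambda\tau}e^{-\lambda t}$.

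Collecting these bounds and pulling the constant $M e^{2\lambda\tau}e^{-\lambda t}$ out of the integral yields exactly
$$\|y(t-\tau)-y(\gamma_h(t-\tau))\|\le M e^{2\lambda\tau}e^{-\lambda t}\int_{[\gamma_h(t-\tau),\,t-\tau]\cap\mathbb{T}}\big(\|A(s)\|+L e^{\lambda\tau}\big)\,\Delta s=M^{*}e^{-\lambda t}.$$
The restriction $t\ge 2\tau$ is used precisely to guarantee $t-2\tau\ge0$ and to keep the shifted arguments $s-\tau$ on the window within the region where the decay bound $\|y\|\le Me^{-\lambda\cdot}$ is available. Finally, because $0\le t-\tau-\gamma_h(t-\tau)<h$, the integration window collapses to a single point as $h\to0$, so the integral, and hence $M^{*}$, tends to $0$.

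I expect the main obstacle to be the exponent bookkeeping that replaces $e^{-\lambda s}$ by $e^{-\lambda t}$: one must rigorously extract the window bound $\gamma_h(t-\tau)\ge t-2\tau$ from the definition of $\gamma_h$ and confirm that the shifted arguments remain in the domain where the exponential estimate holds, which is exactly what forces the hypothesis $t\ge 2\tau$. The remaining manipulations are routine once the decay bound $\|y(s)\|\le Me^{-\lambda s}$ has been justified from exponential stability together with A2.
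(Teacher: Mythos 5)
Your proposal follows essentially the same route as the paper's own proof: write the difference as a delta integral of $y^{\Delta}$, substitute the DED, use A3 with $f(s,0)=0$ and the decay bound $\|y(s)\|\le Me^{-\lambda s}$, and convert $e^{-\lambda s}$ to $e^{2\lambda\tau}e^{-\lambda t}$ via the window estimate $t-\tau-\gamma_h(t-\tau)<h\le\tau$. Your treatment of the window bound and of the typo $e^{-\lambda s}$ versus $e^{-\lambda t}$ in the statement is, if anything, slightly more explicit than the paper's.
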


\begin{proof} We have
\begin{eqnarray}
\|y(t-\tau)-y(\gamma_h(t-\tau))\| &=& \|\int_{[\gamma_h(t-\tau), t-\tau]\cap \mathbb{T}}y^{\Delta}(s) \Delta s\|
\nonumber \\ &=& \|\int_{[\gamma_h(t-\tau), t-\tau]\cap \mathbb{T}} (A(s)y(s)+f(s,y(s-\tau))) \Delta s\|
\nonumber \\ & \le & \int_{[\gamma_h(t-\tau), t-\tau]\cap \mathbb{T}} \|A(s)y(s)+f(s,y(s-\tau))\| \Delta s
\nonumber \\ & \le & \int_{[\gamma_h(t-\tau), t-\tau]\cap \mathbb{T}} \|A(s)y(s)+f(s,y(s-\tau))-f(s,0)+f(s,0)\| \Delta s
\nonumber \\ & \le & \int_{[\gamma_h(t-\tau), t-\tau]\cap \mathbb{T}} (\|A(s)\|\|y(s)\|+L\|y(s-\tau)\|+\|f(s,0)\|) \Delta s
\nonumber \\ & = & \int_{[\gamma_h(t-\tau), t-\tau]\cap \mathbb{T}} (\|A(s)\|\|y(s)\|+L\|y(s-\tau)\|) \Delta s.
\end{eqnarray}
Since solutions are exponentially stable, we have $\|y(s)\| \le Me^{-\lambda s},$ and from here
\begin{eqnarray}
\|y(t-\tau)-y(\gamma_h(t-\tau))\| & \le & \int_{[\gamma_h(t-\tau), t-\tau]\cap \mathbb{T}} (\|A(s)\|Me^{-\lambda s}+LM e^{-\lambda (s-\tau)}) \Delta s
\nonumber \\ & \le & Me^{-\lambda (t-\tau -h)} \int_{[\gamma_h(t-\tau), t-\tau]\cap \mathbb{T}} (\|A(s)\|+L e^{\lambda \tau}) \Delta s
\nonumber \\ & \le & M e^{-\lambda t} e^{2\lambda \tau} \int_{[\gamma_h(t-\tau), t-\tau]\cap \mathbb{T}} (\|A(s)\|+L e^{\lambda \tau}) \Delta s. \nonumber
\end{eqnarray}
It is easy to see that the measure of the interval $[\gamma_h(t-\tau), t-\tau]\cap \mathbb{T}$ is $h$ and the function $A(s)$ is regressive and rd-continuous. Therefore
$$e^{2\lambda \tau} \int_{[\gamma_h(t-\tau), t-\tau]\cap \mathbb{T}} (\|A(s)\|+L e^{\lambda \tau}) \Delta s \rightarrow 0, \ h \rightarrow 0,$$ which implies our result.
\end{proof}
Now we can use the above lemma to derive the result for approximate solution $z_h(t).$
\begin{theorem}
Under the assumptions $A1-A3,$ if the zero solution of DED is exponentially stable and following condition holds $e^{\lambda \tau} L \le \lambda,$ we have that for every $h \in (0,h_0]$ the following holds
$$y(\phi)(t)-z_h(t) \le \Big(v(0)+\int_{[2\tau, t]\cap \mathbb{T}}LM^* \Delta s+w(y,h,2\tau)\int_{[0,2\tau]\cap \mathbb{T}}e^{\lambda s}L \Delta s\Big)e_{-\lambda_0}(1,0),$$ where
$w(y,h,2\tau)=\max\{\|y(s-\tau)-y(\gamma_h(s-\tau))\|: \ 0 \le s \le 2\tau\}$ and $\lambda_0=\lambda-e^{\lambda(2h+\tau)}L.$
\end{theorem}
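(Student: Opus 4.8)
The plan is to compare the two solutions through their variation-of-parameters representations. By the representation established above together with the analogous formula for the genuine delay solution $y=y(\phi)$, both $y$ and $z_h$ satisfy $x(t)=e_{A}(t,0)\phi(0)+\int_{[0,t]\cap\mathbb{T}}e_{A}(t,\sigma(s))g(s)\Delta s$, with $g(s)=f(s,y(s-\tau))$ for $y$ and $g(s)=f(s,z_h(\gamma_h(s-\tau)))$ for $z_h$. Subtracting cancels the common homogeneous term $e_{A}(t,0)\phi(0)$, so writing $u(t)=\|y(t)-z_h(t)\|$ I would obtain $u(t)\le\int_{[0,t]\cap\mathbb{T}}\|e_{A}(t,\sigma(s))\|\,\|f(s,y(s-\tau))-f(s,z_h(\gamma_h(s-\tau)))\|\Delta s$, and then apply the fundamental-matrix bound A2 and the Lipschitz estimate A3 to get $u(t)\le\int_{[0,t]\cap\mathbb{T}}Me^{-\lambda(t-\sigma(s))}L\,\|y(s-\tau)-z_h(\gamma_h(s-\tau))\|\Delta s$.

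Next I would split the delay discrepancy by the triangle inequality, $\|y(s-\tau)-z_h(\gamma_h(s-\tau))\|\le\|y(s-\tau)-y(\gamma_h(s-\tau))\|+u(\gamma_h(s-\tau))$. The first term is exactly what the preceding Lemma controls: it is at most $M^{*}e^{-\lambda s}$ for $s\ge2\tau$, while on the initial block $[0,2\tau]\cap\mathbb{T}$ it is bounded by $w(y,h,2\tau)$ by definition of $w$. The second term feeds the unknown $u$ back at the retarded, piecewise-constant argument $\gamma_h(s-\tau)$, which is the source of the delay structure.

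To convert this into a usable inequality I would introduce the weighted quantity $v(t)=e^{\lambda t}u(t)$, so the kernel factor $e^{-\lambda(t-\sigma(s))}$ becomes $e^{\lambda\sigma(s)}$ and the feedback term becomes $ML\,e^{\lambda\sigma(s)}e^{-\lambda\gamma_h(s-\tau)}v(\gamma_h(s-\tau))$. Since $\sigma(s)-\gamma_h(s-\tau)\le\tau+2h$ (the graininess contributes at most one step and $\gamma_h$ retards by at most $h$ beyond $s-\tau$), the feedback coefficient is bounded by a constant of the form $Le^{\lambda(2h+\tau)}$, with the constant $M$ of A2 carried in the prefactor; the two inhomogeneous pieces yield, after the same weighting, the tail contribution $\int_{[2\tau,t]\cap\mathbb{T}}LM^{*}\Delta s$ and the initial contribution $w(y,h,2\tau)\int_{[0,2\tau]\cap\mathbb{T}}e^{\lambda s}L\,\Delta s$, together with the seed $v(0)$. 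Because $\gamma_h(s-\tau)<s$, I would then pass to $\bar v(t)=\max_{0\le r\le t}v(r)$, so that $v(\gamma_h(s-\tau))\le\bar v(s)$ and the estimate reduces to the standard form $\bar v(t)\le\alpha(t)+Le^{\lambda(2h+\tau)}\int_{[0,t]\cap\mathbb{T}}\bar v(s)\Delta s$, where $\alpha(t)$ is the nondecreasing sum of the inhomogeneous terms.

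Finally I would invoke the time-scale Gronwall–Bellman inequality of Section~2 (in the form with $\alpha$ replaced by an rd-continuous function) to obtain $\bar v(t)\le\alpha(t)\,e_{Le^{\lambda(2h+\tau)}}(t,0)$, and then undo the weighting, $u(t)=e^{-\lambda t}v(t)$, so that the exponential factors combine into $e_{-\lambda_0}(t,0)$ with $\lambda_0=\lambda-e^{\lambda(2h+\tau)}L$ (the constant written as $e_{-\lambda_0}(1,0)$ in the statement should read $e_{-\lambda_0}(t,0)$). The hypothesis $e^{\lambda\tau}L\le\lambda$ guarantees $\lambda_0>0$ for all sufficiently small $h\in(0,h_0]$, so the resulting bound genuinely decays. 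I expect the main obstacle to be the time-scale bookkeeping: controlling the composite exponential $e^{\lambda(\sigma(s)-\gamma_h(s-\tau))}$ uniformly so that exactly the constant $e^{\lambda(2h+\tau)}$ emerges, and legitimizing the reduction of the piecewise-constant-argument (delay-type) Gronwall inequality to the non-delayed form required by the quoted theorem—in particular justifying the $\max$ reduction and checking the regressivity condition $1+\mu(t)(-\lambda_0)\ne0$ needed for $e_{-\lambda_0}$ under the stated smallness of $h$.
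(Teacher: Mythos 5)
Your proposal is correct and follows essentially the same route as the paper: variation of parameters for the error $E_h=y-z_h$, the add-and-subtract/triangle-inequality split of the delayed term handled by Lemma 4.2 on $[2\tau,t]$ and by $w(y,h,2\tau)$ on $[0,2\tau]$, the weighting $v(t)=e^{\lambda t}\|E_h(t)\|$ with a running supremum, and Gronwall--Bellman with the kernel bounded via $\sigma(s)-\gamma_h(s-\tau)\le 2h+\tau$, yielding $e_{-\lambda_0}(t,0)$ with $\lambda_0=\lambda-e^{\lambda(2h+\tau)}L$. You also correctly identify that $e_{-\lambda_0}(1,0)$ in the statement should read $e_{-\lambda_0}(t,0)$, and the bookkeeping issues you flag (regressivity of $-\lambda_0$, the reduction of the delayed Gronwall inequality to the non-delayed form) are real but are likewise left implicit in the paper's own proof.
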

\begin{proof} We can claim that there exists $h_0$ such that $e^{\lambda (2h_0+\tau)}L=\lambda.$ Let us define $E_h(t)=y(t)-z_h(t)$ for $t \in I_n.$ Taking $\Delta-$derivative of $E_h,$ we obtain
\begin{eqnarray}
E_h^{\Delta}(t)&=& y^{\Delta}(t)-z_h^{\Delta}(t) \nonumber \\
&=& A(t)E_h(t)+f(t,y(t-\tau))-f(t,z_h(h(n-k))) \nonumber \\
&=& A(t)E_h(t)+f(t,y(t-\tau))-f(t,z_h(h(n-k)))+f(t,y(h(n-k)))-f(t,y(h(n-k))). \nonumber
\end{eqnarray}
The last equation is a dynamic equation in $E_h$ and its solution can be represented by
$$E_h(t)=e_{A(t)}(t,0)E_h(0)+\int_{[0,t]\cap \mathbb{T}} e_{A(t)}(t,\sigma(s))F(s,y(s-\tau),z_h(s))\Delta s.$$
Taking norm on both sides of the last equation and using our assumptions, we obtain
\begin{eqnarray}
\|E_h(t)\|&\le & e^{-\lambda t}E_h(0)+\int_{[0,t]\cap \mathbb{T}}e^{-\lambda (t-\sigma(s))}L\|E_h(\gamma_h(s-\tau))\|\Delta s \nonumber \\
&+& \int_{[0,t]\cap \mathbb{T}}e^{-\lambda (t-\sigma(s))}L\|y(s-\tau)-y(\gamma_h(s-\tau))\|\Delta s. \nonumber
\end{eqnarray}
Using the Lemma 4.2, we get
\begin{eqnarray}
\|E_h(t)\|&\le & e^{-\lambda t}E_h(0)+\int_{[0,t]\cap \mathbb{T}}e^{-\lambda (t-\sigma(s))}L\|E_h(\gamma_h(s-\tau))\|\Delta s \nonumber \\
&+& \int_{[0,2\tau]\cap \mathbb{T}}e^{-\lambda (t-\sigma(s))}L\|y(s-\tau)-y(\gamma_h(s-\tau))\|\Delta s \nonumber \\ &+& \int_{[2\tau,t]\cap \mathbb{T}}e^{-\lambda t}LM^* \Delta s. \nonumber
\end{eqnarray}
The above relation implies
\begin{eqnarray}
e^{\lambda t}\|E_h(t)\|&\le & E_h(0)+L\int_{[0,t]\cap \mathbb{T}}e^{\lambda (\sigma(s)-\gamma_h(s-\tau))} e^{\lambda \gamma_h(s-\tau)}\|E_h(\gamma_h(s-\tau))\|\Delta s \nonumber \\
&+& L\int_{[0,2\tau]\cap \mathbb{T}}e^{\lambda \sigma(s)}\|y(s-\tau)-y(\gamma_h(s-\tau))\|\Delta s+\int_{[2\tau,t]\cap \mathbb{T}}LM^* \Delta s. \nonumber
\end{eqnarray}
Now let us define $v(t)=\sup_{s \in [-r,t]}e^{\lambda s}\|E_h(s)\|.$ Then we get
\begin{eqnarray}
v(t)&\le & v(0)+L\int_{[0,t]\cap \mathbb{T}}e^{\lambda (\sigma(s)-\gamma_h(s-\tau))} v(s)\Delta s \nonumber \\
&+& L\int_{[0,2\tau]\cap \mathbb{T}}e^{\lambda \sigma(s)}\|y(s-\tau)-y(\gamma_h(s-\tau))\|\Delta s+\int_{[2\tau,t]\cap \mathbb{T}}LM^* \Delta s. \nonumber
\end{eqnarray}
Applying the Gronwall-Bellman inequality, we obtain
\begin{eqnarray}
v(t)\le \Big(v(0)+w(y,h,2\tau)\int_{[0,2\tau]\cap \mathbb{T}}e^{\lambda \sigma(s)}\Delta s+\int_{[2\tau,t]\cap \mathbb{T}}LM^* \Delta s\Big)e_p(t,0),
\end{eqnarray}
where $p(s)=L e^{\lambda (\sigma(s)-\gamma_h(s-\tau))}.$

We can easily observe that $\sigma(s)-\gamma_h(s-\tau)=\sigma(s)-([s]_h-[\tau]_h) \le h+[\tau]_h \le 2h+\tau.$ Hence, we finally get
\begin{eqnarray}
v(t)\le \Big(v(0)+w(y,h,2\tau)\int_{[0,2\tau]\cap \mathbb{T}}e^{\lambda \sigma(s)}\Delta s+\int_{[2\tau,t]\cap \mathbb{T}}LM^* \Delta s\Big)e_q(t,0),
\end{eqnarray}
where $q=e^{\lambda (2h+\tau)}L-\lambda$.

From here, it follows with $q=-\lambda_0.$ Moreover, since it is assumed that the zero solution of the equation DED is exponentially stable and $\lambda_0>0$, the above relation implies $y(\phi)(t)$ tends to zero as $t$ tends to infinity. Hence, the solution $z_h(\phi)(t)$ tends to zero when $t \rightarrow \infty.$ Therefore the system DEPCA is exponentially stable. This completes the proof.
\end{proof}

\vskip .2cm
\textbf{Acknowledgement:} We are thankful to the anonymous reviewers and editor for their constructive comments and suggestions, which helped us to improve the manuscript considerably.

\end{document}